\documentclass[12pt]{article}
\usepackage{amssymb}
\usepackage[dvips]{epsfig}
\usepackage{times}
\usepackage{graphicx}

\setlength{\topmargin}{-0.5in}
\setlength{\textheight}{8.9in}
\setlength{\oddsidemargin}{-0.1in}
\setlength{\evensidemargin}{0.1in}
\setlength{\textwidth}{6.75in}

\pagestyle{plain}

\makeatletter\@addtoreset {equation}{section}\makeatother

\newtheorem{theorem}{Theorem}
\newtheorem{lemma}{Lemma}

\newenvironment{proof}{
    \noindent {\it Proof.}}{\hfill$\Box$
}

\def\eps{\varepsilon}
\def\supp{{\rm support\,}}

\begin{document}

\title{\bf On the energy-minimizing steady states \\ of a thin film equation}

\author{Almut Burchard\\
University of Toronto\\
{\tt almut@math.utoronto.ca}\\[0.7cm]
Marina Chugunova\\
University of Toronto\\
{\tt chugunom@math.utoronto.ca}\\[0.7cm] 
Benjamin K. Stephens\\
University of Washington\\
{\tt benstph@math.washington.edu}\\[0.7cm]}

\date{March 9, 2010}

\maketitle

\bigskip
\bigskip

\begin{abstract}
Steady states of the thin film equation
$u_t+[u^3\,(u_{\theta\theta\theta} + \alpha^2 u_\theta -\sin\theta)]_\theta=0$
are considered on the periodic domain $\Omega = (-\pi,\pi)$.
The equation defines a generalized gradient flow for an
energy functional that controls the $H^1(\Omega)$-norm. The
main result establishes that there exists for each given mass a
unique nonnegative function of minimal energy.
This minimizer is symmetric decreasing about $\theta=0$.
For $\alpha<1$ there is a critical value for the mass
at which the minimizer has a touchdown zero.
If the mass exceeds this value, the minimizer is
strictly positive. Otherwise, it is supported
on a proper subinterval of $\Omega$ and meets the dry region at zero
contact angle.  A second result explores the relation between
strict positivity and exponential convergence for steady states.
It is shown that positive minimizers are locally
exponentially attractive, while the distance from
a steady state with a dry region cannot decay faster
than a power law.

\end{abstract}

\newpage

\section{Introduction}

Degenerate fourth order parabolic equations of the form
$$
u_t + \nabla \cdot (u^n \nabla \Delta u) + \mbox{lower order terms}=0
$$
are commonly used to model the evolution of thin liquid films
on the surface of a solid.   Here, $u(x,t)$ describes the thickness
of the fluid at time $t$ at the point $x$,
the fourth derivative term models surface tension,
and the exponent $n>0$
is determined by the boundary condition between the liquid and
the surface of  the cylinder. A particularly interesting
case is $n=3$, which corresponds to a
``no-slip'' boundary condition.

In this paper, we study the equation
\begin{equation} \label{eq:PDE}
 u_t + \left[u^n\,(u_{\theta\theta\theta} +
\alpha^2\,u_\theta-\sin\theta) + \omega u \right]_\theta = 0\,,\quad
\theta \in \Omega = (-\pi, \pi)
\end{equation}
with periodic boundary conditions.
For $n=3$ and $\alpha=1$, this describes the evolution of a thin liquid
film on the outside of a horizontal cylinder that rotates slowly
about its axis, see Figure~1.
The film is assumed to be uniform  along the axis
of the cylinder, and its
thickness at time $t$ and angle~$\theta$ (measured from the bottom)
is given by the function $u(\theta,t)$.
In Eq.~(\ref{eq:PDE}), the first summand in the parentheses
models surface tension, the next term is
a correction due to the curvature of the cylinder, and the third term
models gravitational drainage.
The last term  models convection due to rotation. 
We have scaled the units of length and
time so that the surface tension and gravitational terms
appear with coefficient one; the coefficient $\alpha\ge 0$ is
a geometric constant, and $\omega$ is proportional to the
speed of rotation. Here, we will
study the the non-rotating cylinder with $\omega=0$.
We are mainly interested in the case where $n=3$ and $\alpha=1$, 
but find it illuminating to also  consider other values of $n$ and
$\alpha$.

\begin{figure}[ht]
\label{fig:cylinder}
\begin{center}
\includegraphics[height=5cm] {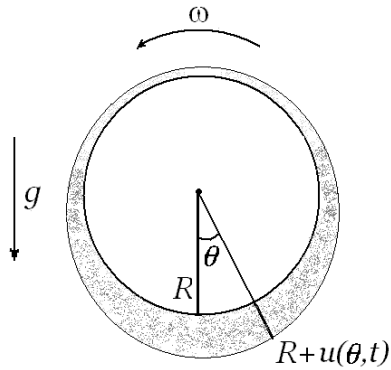}\\
\end{center}
\caption{\small Liquid film on the outer surface of a rotating horizontal
cylinder in the presence of gravity.}
\end{figure}

The model in Eq.~(\ref{eq:PDE}) (with $n=3$ and $\alpha=1$) was
first derived and studied by
Pukhnachov \cite{Pukh1, Pukh3}. Note that Pukhnachov uses slightly
different coordinates (with $\theta=\frac{3\pi }{2}$
at the bottom of the cylinder), and a different scaling (where
$\omega=1$).  The derivation relies on the lubrication approximation;
it assumes that the liquid film is very thin
compared to the radius of the cylinder, and that
the rotation is slow enough to neglect centrifugal forces.
Pukhnachov's model refines an earlier model
of Moffatt~\cite{Moffatt} by including surface tension.
The second order curvature term in the equation is reminiscent
of a porous-medium equation, but appears with the opposite sign,
resulting in a long-wave instability~\cite{B15}.
Interesting numerical and asymptotical
analysis along with numerous open questions can be found in
\cite{Benilov1, Benilov3, Kar}.

A fundamental technical problem in thin film equations
is well-posedness, i.e., to show that nonnegative initial data 
give rise to unique nonnegative solutions that depend continuously on
the data. The difficulty is that solutions of fourth order
parabolic equations generally do not satisfy maximum or
comparison principles, and linearization leads to semigroups
that do not preserve positivity. To give  a simple example,
the function $u(x,t) = 1 + t\cos(x)$, which  solves
Eq.~(\ref{eq:PDE}) in the linear case  $n=0$ with
$\alpha=1$ and $\omega=0$, ceases to be positive for $t>1$.
Thus, positivity of solutions of thin film equations
is a nonlinear phenomenon.
Other relevant problems in the area  concern long-term
behavior, finite-time blow-up, and the
interface between wet ($u>0$) and dry $(u=0)$ regions.
These problems have been studied rigorously in a vast
body of papers since the pioneering article of Bernis
and Friedman~\cite{BF}, see for
example~\cite{B2, BertPugh1996,B15,Report,Otto}
and references therein.  There is an even
larger literature
that studies the properties of physically relevant solutions
through asymptotic expansions, numerical analysis, and laboratory
experiments.

Bernis and Friedman proved that initial-value problems
for thin film equations
can be solved in suitable classes of nonnegative weak
solutions.  An important technical contribution was their use of an
{\em entropy functional} that decreases with time along
solutions. A few years later, the so-called $\alpha$-entropies
were independently discovered by Bertozzi and Pugh~\cite{BertPugh1996}
and by Beretta, Bertsch, and dal Passo~\cite{B2}.
Since then, other families of entropies have been
found~\cite{CDGJ,MJ,Laug}.  Entropy functionals are the
basis for results on short- and
long-term existence, positivity, finite speed of propagation,
regularity, blow-up, and the long-time behavior of solutions.

For $0<n\le 1$, well-posedness and convergence
to steady states have recently been established by treating
Eq.~(\ref{eq:PDE}) as a gradient flow on a space of 
measures endowed with the Wasserstein distance~\cite{MMS,Otto}, 
where the exponent $n$ appears as a mobility parameter~\cite{CLSS}.
These gradient flow techniques also take advantage
of energy and entropy dissipation. 
However, Wasserstein distances with mobility $n>1$ 
are not well understood, and well-posedness remains an open 
problem.

By a {\em solution} of Eq.~(\ref{eq:PDE})
we mean a nonnegative function
$u\in L^2\bigl((0,T), H^2(\Omega)\bigr)$ that satisfies
$$
\int_0^T\int_\Omega
\Bigl\{ u\phi_t -
(u_{\theta\theta} + \alpha^2\,u + \cos\theta)\bigl(u^n\phi_\theta\bigr)_\theta
+ \omega u \phi_\theta\Bigr\}\, d\theta dt = 0
$$
for every smooth test function
with compact support in $\Omega\times (0,T)$.
This agrees with the strongest notion of generalized solutions
from~\cite{BF,BertPugh1996}.
For a class of equations that includes Eq.~(\ref{eq:PDE})
with $n=3$, long-time existence of such solutions was recently
proved in \cite{Report}. These solutions are widely
believed to be unique.

Questions about steady states appear in many applications. When
do steady states exist, when are they uniquely determined by their mass, 
are they strictly
positive or do they exhibit dry regions, and under what conditions
are they stable? Do steady states attract all bounded solutions?
When can we expect exponential convergence?
Linearizations of Eq.~(\ref{eq:PDE})
about steady states were examined analytically and numerically
in \cite{ChKP09,John}.

For $\omega=0$, Eq.~(\ref{eq:PDE})
defines a generalized gradient flow for the {\em energy}
\begin{equation}
\label{eq:def-E}
\label{EF} E(u) = \frac{1}{2} \int_\Omega u_\theta^2 -
\alpha^2\,u^2\,d\theta -\int_{\Omega} u\,\cos\theta\,d\theta\, ,
\end{equation}
in the sense that $u_t =
\left[u^n\left(\frac{\delta E}
{\delta u}\right)_{\theta}\right]_\theta$.
Here, $\frac{\delta E}{\delta u}$ denotes the $L^2$-gradient
of $E$.  This implies the dissipation estimate
\begin{equation}\label{eq:dissipate}
\frac{d}{dt} E\bigl(u(\cdot, t)\bigr)
= - \int_\Omega
u^n \left(\frac{\delta E}{\delta u}\right)_{\!\theta}^2
\, d\theta
\le 0\,.
\end{equation}
The subject of this paper are the minimizers
of $E$ on the set of nonnegative $2\pi$-periodic functions of a given mass
$$
C_M = \left\{u\in H^1(\Omega) \ \Big\vert \
u\ge 0, \int_{\Omega} u\, d\theta = M \right\}
$$
and their role in the dynamics.
Note that for $\alpha=1$, $E$ is convex but
not strictly convex.  For $\alpha<1$, the functional
is strictly convex, and
has a unique critical point on $C_M$, which is a global minimizer.
For $\alpha>1$, it is  not convex, and we may
expect multiple critical points.

We will show that $E$ has a unique minimizer on $C_M$ for each
value of $\alpha$ and each mass $M>0$,
see Theorem~\ref{thm:min}.
These minimizers may have dry regions; in that case, the
contact angle of the fluid film is
zero, see the Figure~2.
In particular, our result establishes the existence of
zero contact angle steady states for Eq~(\ref{eq:PDE}) if
$M(1-\alpha^2)\le 2\pi$.
Our proof relies on symmetric decreasing rearrangements
and the first variation of the energy.

The minimizers are time-independent solutions
of Eq.~(\ref{eq:PDE}) with $\omega=0$.
Additional steady states may arise for $\alpha>1$ as saddle points
of the energy. For any value of $\alpha$ and $M$, there
is also a continuum of steady states
with have non-zero contact angles,
analogous to the steady states in~\cite{LaugPugh}, 
whose role in the evolution remains unclear.

We expect that for $\alpha\le 1$, the unique energy minimizer
should attract all solutions on $C_M$ as $t\to\infty$.
Unfortunately, in the absence of a proper well-posedness
theory, Lyapunov's theorem is not sufficient to support this
expectation. In Theorem~\ref{thm:decay} we provide
partial results in that direction.  If the energy-minimizing steady state
is strictly positive, then it exponentially attracts all
solutions in a neighborhood.  On the other hand, for $n>\frac{3}{2}$,
a steady state that has a dry interval
of positive length cannot be exponentially attractive.
In particular for $n=3$ and $\alpha=1$,
the distance between the solution and the minimizer
decays no faster than $t^{-\frac{2}{3}}$.
Our proof combines energy and entropy inequalities
in the spirit of~\cite{B2,BertPugh1996,CU,Tudorascu}.

All our results are easily adapted to the
long-wave stable case where the sign of the second
order term is reversed. In that case,
the energy-minimizing steady state is strictly positive and locally
exponentially attractive so long as $M(1+\alpha^2)>2\pi$.
For $M(1+\alpha^2)<2\pi$, the energy minimizer has a dry interval
of positive length, which it meets at zero contact angles.
This contrasts with a theorem of Laugesen and Pugh that
excludes zero contact angle steady states
for the corresponding thin film equation without the
sine term~\cite{LaugPugh}.

\section{Identification of energy minimizers}

We begin by showing that for every $M>0$ there exists a
function $u$ with mass $M$ that minimizes the energy.
The first lemma provides the necessary global
bounds on  the functional.

\begin{lemma} [Lower bound on the energy.]
\label{lem:energy-bound}
$E$ is bounded from below and coercive on $C_{M}$.
\end{lemma}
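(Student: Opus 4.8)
The plan is to show that the only possibly negative contributions to $E$ --- the curvature term $-\frac{\alpha^2}{2}\int_\Omega u^2\,d\theta$ and the gravitational term $-\int_\Omega u\cos\theta\,d\theta$ --- grow strictly slower than the surface-tension term $\frac12\int_\Omega u_\theta^2\,d\theta$ as one moves out in $H^1(\Omega)$. The gravitational term is harmless: since $u\ge 0$ and $|\cos\theta|\le 1$, we have $\left|\int_\Omega u\cos\theta\,d\theta\right|\le\int_\Omega u\,d\theta = M$. The delicate term is $\int_\Omega u^2\,d\theta$. Note that for $\alpha>1$ the quadratic part of $E$ is indefinite --- on multiples of $\cos\theta$ it is negative --- so without the pointwise constraint $u\ge 0$ the functional would be unbounded below. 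Thus the nonnegativity and mass constraints must enter essentially.

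First I would exploit nonnegativity and the fixed mass to write
$$\int_\Omega u^2\,d\theta \le \|u\|_{L^\infty}\int_\Omega u\,d\theta = M\,\|u\|_{L^\infty}.$$
The point of this step is that it is linear, rather than quadratic, in the size of $u$. To control $\|u\|_{L^\infty}$, I would use that $u\in H^1(\Omega)\subset C(\overline\Omega)$ attains its mean $M/(2\pi)$ at some point $\theta_0$; integrating the derivative from $\theta_0$ and applying Cauchy--Schwarz gives
$$\|u\|_{L^\infty}\le \frac{M}{2\pi} + \sqrt{2\pi}\,\|u_\theta\|_{L^2}.$$
Crucially, $\|u\|_{L^\infty}$, and hence $\int_\Omega u^2\,d\theta$, is bounded by a constant plus a term that is \emph{linear} in $\|u_\theta\|_{L^2}$.

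Combining these with the estimate on the gravitational term, and writing $X=\|u_\theta\|_{L^2}$, yields
$$E(u)\ \ge\ \frac12 X^2 - \frac{\alpha^2 M\sqrt{2\pi}}{2}\,X - \frac{\alpha^2 M^2}{4\pi} - M,$$
a quadratic in $X$ with positive leading coefficient and coefficients depending only on $M$ and $\alpha$. This quadratic is bounded below, which proves the lower bound. It also tends to $+\infty$ as $X\to\infty$; since the preceding step controls $\|u\|_{L^2}$, and therefore the full $H^1$-norm, by $\|u_\theta\|_{L^2}$, any sublevel set $\{u\in C_M : E(u)\le\Lambda\}$ is bounded in $H^1(\Omega)$, which is the coercivity claim.

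The main obstacle is the regime $\alpha>1$, where the naive bound via the Wirtinger inequality $\int_\Omega v^2\,d\theta\le\int_\Omega v_\theta^2\,d\theta$ (valid for mean-zero $v$, and giving the clean estimate $E\ge -\frac{\alpha^2 M^2}{4\pi}-M$ directly when $\alpha\le 1$) fails. There, boundedness below genuinely depends on the sign constraint, and the argument above succeeds only because the $L^\infty$-bound on a nonnegative function of fixed mass is linear in $\|u_\theta\|_{L^2}$, so the indefinite quadratic term cannot overtake the surface-tension term. I expect verifying this linearity --- really an $L^1$--$\dot H^1$ interpolation made possible by the pointwise constraint --- to be the one step that must be handled with care.
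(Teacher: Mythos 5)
Your proposal is correct and follows essentially the same route as the paper: the nonnegativity-plus-mass bound $\int_\Omega u^2\,d\theta\le M\|u\|_{L^\infty}$, the sup-norm bound $\|u\|_{L^\infty}\le \frac{M}{2\pi}+C\|u_\theta\|_{L^2}$ (the paper uses the sharper constant $\sqrt{\pi}$ by going at most halfway around the circle, but this is immaterial), and the resulting lower bound by a coercive quadratic in $\|u_\theta\|_{L^2}$. The only difference is expository: you spell out why coercivity follows and why the constraint $u\ge 0$ is essential for $\alpha>1$, which the paper leaves implicit.
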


\begin{proof}
Using that $u$ is nonnegative and has mean $\frac{M}{2\pi}$,
we estimate
$$
\int_{\Omega} u^2 dx \le M \,\|u \|_{L^{\infty}}\,,
\quad
||u||_\infty \le \frac{M}{2\pi} + \sqrt{\pi}||u_\theta||_{L^2}\,.
$$
It follows that
\begin{equation} \label{eq:energy-bound}
E(u) \ge  \frac{\pi}{2}\left(
||u||_\infty - \frac{M}{2\pi}(1+\alpha^2)\right)^2 -
\frac{M^2}{4\pi}\alpha^2(2+\alpha^2) -M\,.
\end{equation}
which is clearly bounded below. A similar estimate
shows that $E$ grows quadratically as $||u||_{H^1}\to\infty$.
\end{proof}

\bigskip The lemma implies that minimizing sequences are
bounded in $H^1$. Passing to a subsequence,
we can construct  a minimizing sequence $\{u_j\}_{j\ge 1}$
that converges weakly in $H^1$ and strongly in $L^2$
to some function $u$ in $C_M$. Since $E$ is weakly
lower semicontinuous on $H^1$, $u$
is the desired minimizer.  We next describe some properties of
the minimizers.

\begin{lemma} [Symmetry.]
\label{lem:symmetry} Minimizers of $E$
on $C_M$ are symmetric decreasing about $\theta=0$.
\end{lemma}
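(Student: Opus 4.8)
The plan is to use symmetric decreasing rearrangement. Let $u^*$ denote the symmetric decreasing rearrangement of $u$ about $\theta=0$ on $\Omega=(-\pi,\pi)$. Since rearrangement preserves nonnegativity and the integral, $u^*\in C_M$. I must show that $u^*$ does not increase the energy $E$, and that if $u$ is a minimizer then $u$ must already coincide (up to the obvious symmetry) with $u^*$. The energy splits into three pieces, and I will track each under rearrangement.

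Let me analyze the terms. First I would recall the standard facts that rearrangement preserves the $L^2$-norm, so the term $-\frac{\alpha^2}{2}\int u^2\,d\theta$ is unchanged. Next, the Dirichlet energy $\frac12\int u_\theta^2\,d\theta$ does not increase under symmetric decreasing rearrangement — this is the Pólya–Szegő inequality, which holds on the circle for the periodic rearrangement. Finally, for the gravitational term $-\int u\cos\theta\,d\theta$: since $\cos\theta$ is itself symmetric decreasing about $\theta=0$ on $(-\pi,\pi)$, its own symmetric decreasing rearrangement equals itself, and the Hardy–Littlewood rearrangement inequality gives $\int u\cos\theta\,d\theta\le\int u^*\cos\theta\,d\theta$, so the term $-\int u\cos\theta\,d\theta$ does not increase either. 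Combining the three, $E(u^*)\le E(u)$.

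To upgrade this to the full symmetry statement, I would argue by the equality cases. If $u$ is a minimizer, then $E(u^*)=E(u)$, so equality must hold simultaneously in both the Pólya–Szegő inequality and the Hardy–Littlewood inequality. The Hardy–Littlewood equality case is the key: because $\cos\theta$ is \emph{strictly} decreasing on $(0,\pi)$ (hence its level sets are genuine intervals with no flat pieces away from the endpoints), equality in $\int u\cos\theta\le\int u^*\cos\theta$ forces $u$ to be a rearrangement of $u^*$ that shares the same superlevel-set structure as $\cos\theta$ — that is, $u$ must itself be symmetric decreasing about $\theta=0$ (up to the ambiguity of translating the whole profile, which the strict monotonicity of $\cos\theta$ pins down to centering at $\theta=0$).

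The main obstacle I anticipate is the rigidity in the equality case: the Pólya–Szegő inequality alone has a notoriously delicate equality case (a minimizer could a priori fail to be strictly decreasing on flat pieces or across a dry region), so I would lean primarily on the strict monotonicity of the weight $\cos\theta$ in the Hardy–Littlewood term to force symmetry, using Pólya–Szegő only for the inequality direction. I would need to handle with care the possibility that $u$ has a dry region $\{u=0\}$: there the rearrangement collects all the mass toward $\theta=0$, and I must check that the equality case still forces the support of a minimizer to be a symmetric interval centered at the origin rather than an off-center or disconnected set. This is exactly where the strict decrease of $\cos\theta$ does the work, ensuring the optimal placement of the wet region is symmetric about the bottom of the cylinder.
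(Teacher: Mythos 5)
Your proposal is correct and follows essentially the same route as the paper: rearrange to the symmetric decreasing profile, use conservation of the $L^2$-norm, P\'olya--Szeg\H{o} for the Dirichlet term, and the Hardy--Littlewood inequality for the $\cos\theta$ term, then exploit the \emph{strict} symmetric decrease of $\cos\theta$ in the equality case to force symmetry of the minimizer. The paper's proof is the same argument, citing Lieb--Loss Theorem~3.4 for the rigidity step you describe.
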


\begin{proof}
For $u\in C_M$, let  $u^\#$ be the unique symmetric decreasing
function of $\theta$ that is equimeasurable to~$u$.
Classical results about symmetric decreasing rearrangement
ensure that $u^\#\in C_M$, and that
\begin{equation}\label{eq:rearrange}
||u^\#||_{L^2}=||u||_{L^2}\,,\quad
||u^\#_\theta||_{L^2}\le ||u_\theta||_{L^2}\,,\quad
\int_\Omega u^\# \cos\theta \, d\theta \ge
\int_\Omega u \cos\theta \, d\theta \,,
\end{equation}
see~\cite{PSz}. It follows that
$$
E(u^\#)\le E(u)\,.
$$
If $u$ is a minimizer, then $E(u^\#)=E(u)$, and in particular,
the third inequality in
Eq.~(\ref{eq:rearrange}) must hold with equality. Since
the cosine is {\em strictly} symmetric  decreasing,
this forces $u$ to be symmetric decreasing as well~\cite[Theorem 3.4]{LL}.
\end{proof}

\bigskip
The Euler-Lagrange equation for the minimizer is given by
\begin{equation}\label{eq:EL}
u_{\theta\theta} +\alpha^2 u +\cos\theta
= \lambda \quad \ \mbox{on}\ \{\theta\in\Omega\mid u(\theta)>0\}\,,
\end{equation}
where $\lambda$ is a Lagrange multiplier associated with the
mass constraint.
By considering the first variation of $E$ with respect to nonnegative
functions that need not vanish outside the support of $u$, we see that
$$
u_{\theta\theta} +\alpha^2 u + \cos\theta \le \lambda
\quad \ \mbox{on}\ \Omega
$$
as a distribution. This suggests that the first derivative of
a minimizer should vanish at the boundary of its support,
i.e., the film meets the surface of the cylinder at zero contact angle.
The next lemma confirms this suspicion.

\begin{lemma} [Zero contact angle.]
\label{lem:zero}
Let $u$ be a minimizer of $E$ on $C_M$.
If $u$ has its first positive zero at $\theta=\tau$, then
$u_\theta(\tau_-)=0$ and $u_{\theta\theta}(\tau_-)>0$.
In particular, $u\in {\cal C}^{1,1}(\Omega)$.
\end{lemma}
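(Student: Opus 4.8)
The plan is to combine the two pieces of information already assembled in the excerpt: the symmetric-decreasing structure of the minimizer (Lemma~\ref{lem:symmetry}) together with the Euler--Lagrange equation~(\ref{eq:EL}) on the support, and the distributional inequality $u_{\theta\theta}+\alpha^2 u+\cos\theta\le\lambda$ valid on all of $\Omega$. First I would fix the geometry. Since $u$ is symmetric decreasing about $0$ and $\tau$ is its first positive zero, the open support is exactly $(-\tau,\tau)$ with $\tau\in(0,\pi]$, and $u\equiv 0$ on the complementary dry region. On $(-\tau,\tau)$ equation~(\ref{eq:EL}) is a linear second-order ODE with analytic coefficients, so $u$ is smooth there and extends together with all its one-sided derivatives to the closed interval; in particular $u_\theta(\tau_-)$ and $u_{\theta\theta}(\tau_-)$ exist. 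Monotonicity of $u$ on $(0,\tau)$ gives the easy bound $u_\theta(\tau_-)\le 0$.

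For the zero contact angle I would compute $u_{\theta\theta}$ as a distribution across $\tau$. Because $u$ is continuous, equals a smooth function on the left of $\tau$, and vanishes on the right, the distributional second derivative is its bounded classical part plus the jump of $u_\theta$ times a Dirac mass, namely $-u_\theta(\tau_-)\,\delta_\tau$. Inserting this into $u_{\theta\theta}\le\lambda-\alpha^2 u-\cos\theta$, whose right-hand side is a bounded function with no singular part, forces the coefficient of $\delta_\tau$ to be nonpositive: testing against a nonnegative mollifier concentrating at $\tau$ shows that a strictly positive Dirac coefficient would make the left-hand side diverge while the right-hand side stays bounded. Hence $-u_\theta(\tau_-)\le 0$, i.e. $u_\theta(\tau_-)\ge 0$, and combined with $u_\theta(\tau_-)\le 0$ this yields $u_\theta(\tau_-)=0$. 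The touchdown endpoint $\tau=\pi$ is analogous: there the support abuts the seam from both sides, so the relevant jump is $-2u_\theta(\pi_-)$ by evenness of $u$, and the same conclusion follows.

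For the sign of the second derivative I would evaluate~(\ref{eq:EL}) at $\tau$, using $u(\tau)=0$, to get $u_{\theta\theta}(\tau_-)=\lambda-\cos\tau$, and show it is strictly positive by Taylor expansion from the left together with $u\ge 0$. If $u_{\theta\theta}(\tau_-)<0$ then $u(\theta)\approx\tfrac12 u_{\theta\theta}(\tau_-)(\theta-\tau)^2<0$ just to the left of $\tau$, contradicting nonnegativity. The degenerate case $u_{\theta\theta}(\tau_-)=0$ I would rule out by differentiating the ODE: since $u(\tau)=u_\theta(\tau_-)=u_{\theta\theta}(\tau_-)=0$, one finds $u_{\theta\theta\theta}(\tau_-)=\sin\tau$, which is strictly positive for $\tau\in(0,\pi)$ and again forces $u<0$ on the left through the cubic term; at the remaining endpoint $\tau=\pi$ the third derivative also vanishes, but a further differentiation gives $u_{\theta\theta\theta\theta}(\pi)=\cos\pi=-1<0$, and the negative quartic term yields the same contradiction. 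Thus $u_{\theta\theta}(\tau_-)=\lambda-\cos\tau>0$ in every case.

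Finally, with $u_\theta$ continuous across $\tau$ (it vanishes on both sides) and $u_{\theta\theta}$ equal to the bounded function $\lambda-\alpha^2 u-\cos\theta$ on the support and to $0$ on the dry region, $u_\theta$ is globally Lipschitz, so $u\in{\cal C}^{1,1}(\Omega)$. I expect the main obstacle to be the distributional jump computation in the second step---carefully justifying that the global inequality excludes the upward kink---and, secondarily, disposing of the degenerate case $u_{\theta\theta}(\tau_-)=0$, where one must chase successive derivatives of the ODE and treat the endpoint $\tau=\pi$ separately.
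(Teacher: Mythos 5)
Your proposal is correct, but the argument for the first claim takes a genuinely different route from the paper's. To show $u_\theta(\tau_-)=0$, the paper constructs an explicit competitor: it stretches $u$ near the contact point by a bi-Lipschitz change of variables (folding across the seam when $\tau=\pi$), computes that the Dirichlet term drops by $\frac{\eps}{4}u_\theta(\tau_-)^2+O(\eps^2)$ while all other terms and the mass renormalization contribute only $O(\eps^2)$, and contradicts minimality directly. You instead invoke the global variational inequality $u_{\theta\theta}+\alpha^2 u+\cos\theta\le\lambda$ and observe that an upward kink at $\tau$ would place a positive Dirac mass in $u_{\theta\theta}$, which the bounded right-hand side forbids; this is the standard obstacle-problem argument, and it makes rigorous exactly the ``suggestion'' the paper states just before the lemma. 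The trade-off is that your route leans on that distributional inequality, which the paper asserts but does not prove; to be self-contained you should derive it from one-sided variations $u+t\bigl(\phi-(\int\phi)\psi\bigr)$ with $\phi\ge 0$ arbitrary and $\psi\ge 0$ normalized and compactly supported in the open positivity set, which preserve mass and nonnegativity for small $t>0$. The paper's explicit construction avoids that dependence at the cost of a slightly longer computation. Your argument for $u_{\theta\theta}(\tau_-)>0$ --- ruling out the degenerate case via $u_{\theta\theta\theta}(\tau_-)=\sin\tau$, reducing to $\tau=\pi$, and then using $u_{\theta\theta\theta\theta}(\pi_-)=-1$ --- is essentially identical to the paper's, as is the concluding ${\cal C}^{1,1}$ regularity statement.
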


\begin{figure}[ht]
\label{fig:steadystates}
\begin{center}
\includegraphics[height=5cm] {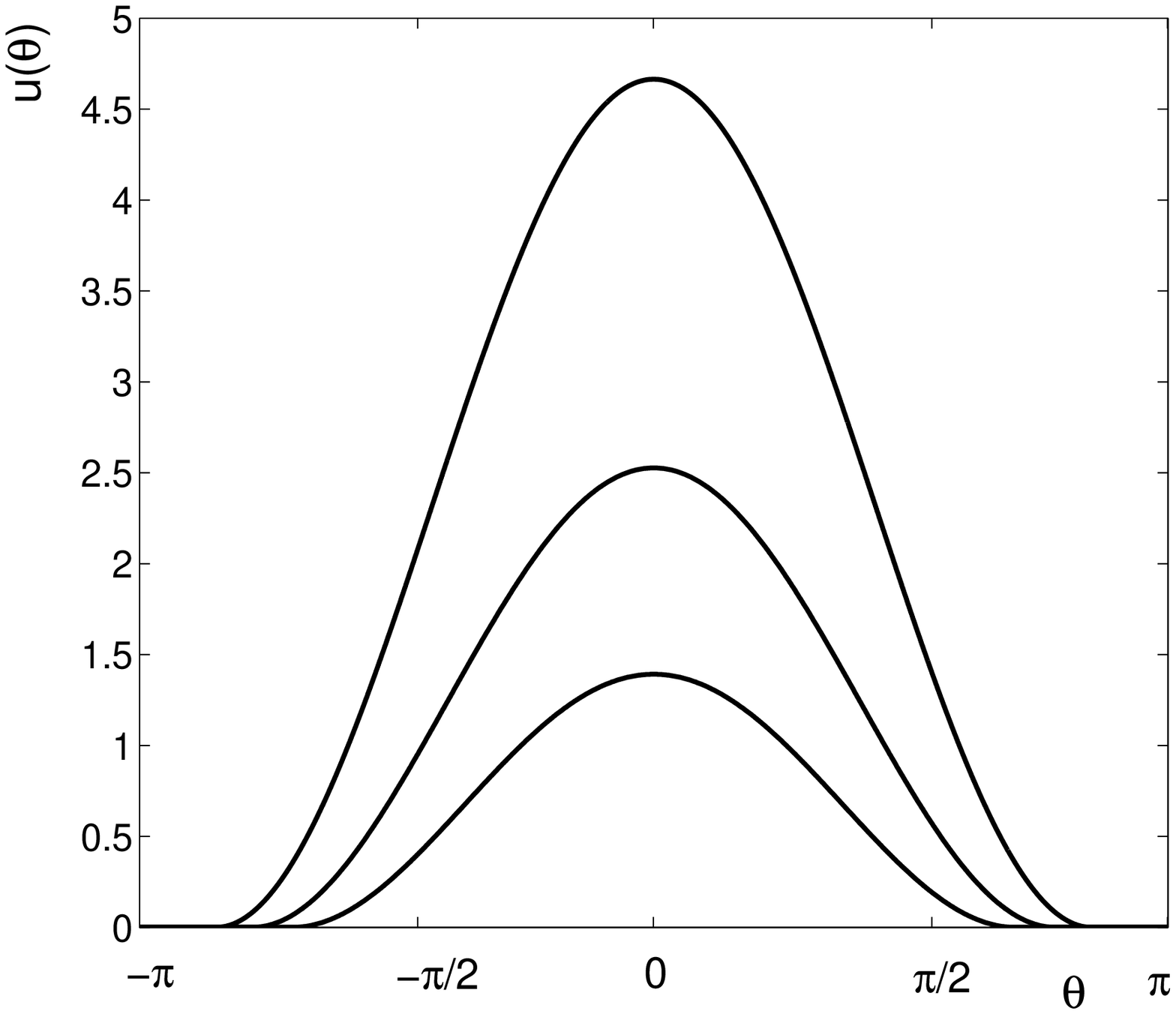}
\quad\quad \quad\quad
\includegraphics[height=4.5cm] {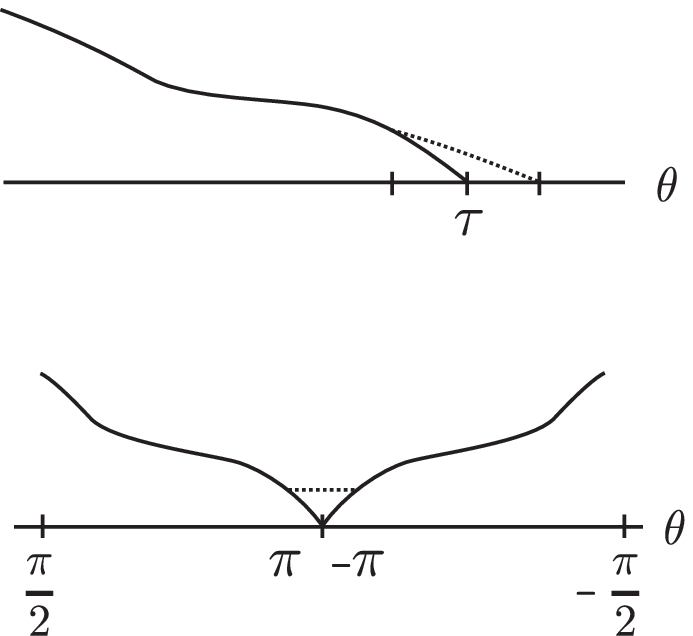}
\end{center}
\caption{\small
On the left: Steady
states for $\alpha=1$ and initial data
$ u_0 = 0.5, 1, 2$.  As the mass is getting smaller, the
minimizer becomes more concentrated, and as mass goes to infinity
the support tends to $[-\pi, \pi]$.
On the right:
Energy-decreasing variations
resulting from non-zero contact angles.
}
\end{figure}

\begin{proof} By Eq.~(\ref{eq:EL}), $u$ has one-sided derivatives of
arbitrary order at $\tau$, and $u_\theta(\tau_-)\le 0$.
Suppose that $u_\theta(\tau_-)< 0$. We will modify $u$ to
construct a new valid competitor with lower energy.
If $\tau<\pi$, set $v(\theta)=u(\phi(\theta))$, where
$\phi:\Omega\to\Omega$ is the bi-Lipschitz map defined by
$$
\phi(\theta)=\left\{\begin{array}{ll}
\frac{1}{2}(\theta+\tau-\eps)\,,\quad &\tau-\eps\le\theta\le\tau+\eps\,,
\\[0.1cm]
2\theta-\tau-2\eps\,,& \tau+\eps\le \theta\le \tau + 2\eps\,,  \\[0.1cm]
\theta\,,& \mbox{otherwise}\,.
\end{array}\right.
$$
Choose $\eps>0$ small enough so that
$u$ vanishes on $[\tau,\tau+2\eps]$.  Then $v$
vanishes on $[\tau+\eps,\tau+2\eps]$.
The difference between the leading terms in the energy integrals
is given by
$$
\frac{1}{2} \int_\Omega  v_\theta^2\, d\theta
-\frac{1}{2} \int_\Omega u_\theta^2\, d\theta
= \frac{1}{2} \int_{\tau-\epsilon}^{\tau + \epsilon} v_\theta^2 \, d\theta
- \frac{1}{2} \int_{\tau-\epsilon}^\tau u_\theta^2 \, d\theta
= -\frac{\eps}{4} u_\theta(\tau_-)^2 + O(\eps^2)\,,
$$
see Figure~2, top right.
Since the remaining terms contribute only corrections
of order $O(\eps)^2$ to the energy difference, it follows that
$$
E(v)-E(u) = -\frac{\eps}{4} u_\theta(\tau_-)^2 + O(\eps)^2\,.
$$
If $\tau=\pi$, the same estimate holds (by the symmetry of $u$)
for $v(\theta)= u(\min\{|\theta|,\pi-\frac{1}{4}\eps\})$,
see Figure~2, bottom right.
In either case, the mass of $v$ is
$M'= \int_\Omega v \,  d\theta = M + O(\eps)^2$.
We finally set  $w(\theta)= \frac{M}{M'} v(\theta)$,
which has the correct mass and satisfies
$E(w)=E(v)+O(\eps^2)<E(u)$ for $\eps$ sufficiently small,
a contradiction. We conclude that $u_\theta(\tau_-)=0$,
proving the first claim.

To prove the second claim, we analyze the sign
of the first non-vanishing left derivative of $u$
at $\tau$.  Clearly, $u_{\theta\theta}(\tau_-)\ge 0$.
Suppose that $u_{\theta\theta}(\tau_-)=0$.
Differentiating Eq.~(\ref{eq:EL}), we obtain
$u_{\theta\theta\theta}(\tau_-)=\sin\tau\ge 0$.
Since $\tau$ is the first positive zero of $u$,
the derivative $u$ from the left
cannot be positive, and so $\theta=\pi$ is the
only possibility.  Differentiating
once more, we obtain $u_{\theta\theta\theta\theta}(\pi_-)=-1$,
which is the wrong sign for $u$ to have a minimum at $\pi$. It
follows that $u_{\theta\theta}(\tau_-)>0$, as claimed.

\end{proof}

\bigskip
We are now ready to compute the minimizers of $E$ on $C_M$
explicitly.
A particular solution of the
Euler-Lagrange equation in Eq.~(\ref{eq:EL}) is given by
\begin{equation}
\label{eq:u-part}
u^0(\theta)= \left\{ \begin{array}{ll}
-\frac{1}{2} \theta\sin\theta\,,\quad & \alpha=1\,,\\[0.1cm]
\frac{1}{1-\alpha^2}
\left( \cos\theta -\frac{1+\alpha^2}{2\alpha}\cos(\alpha\theta)\right)\,,
\quad & \alpha\ne 1\,,
\end{array}\right.
\end{equation}
and the general solution can be represented as
\begin{equation}\label{eq:EL-sol}
u(\theta) =A\cos(\alpha \theta) + B \sin(\alpha\theta) +
\frac{\lambda}{\alpha^2} + u^0(\theta)\,.
\end{equation}
The following theorem summarizes our results.
The statement is illustrated in Figure~3.

\begin{figure}[ht]
\label{fig:mass-tau}
\begin{center}
\includegraphics[height=5cm] {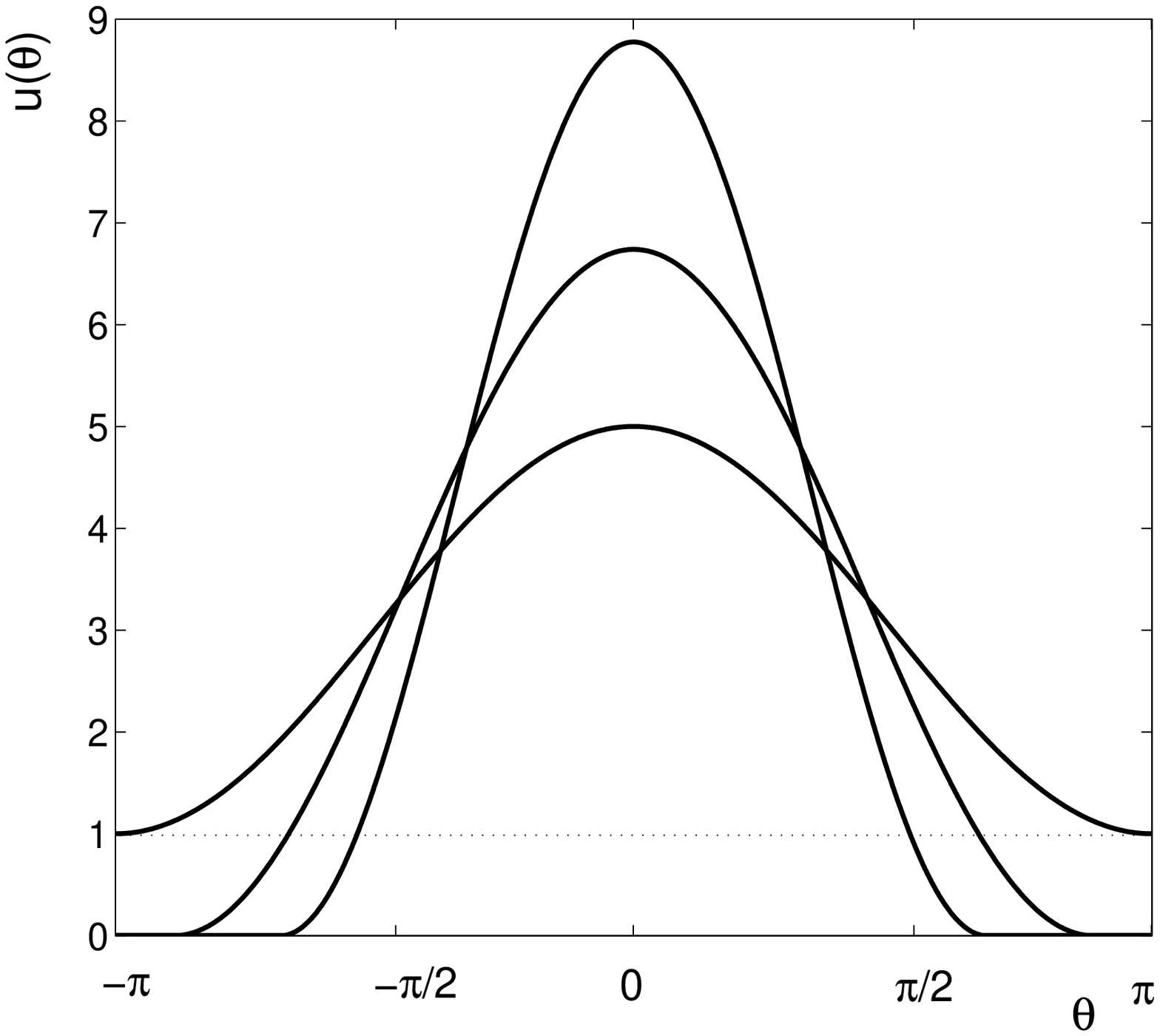}
\quad \quad \quad \quad
\includegraphics[height=5cm] {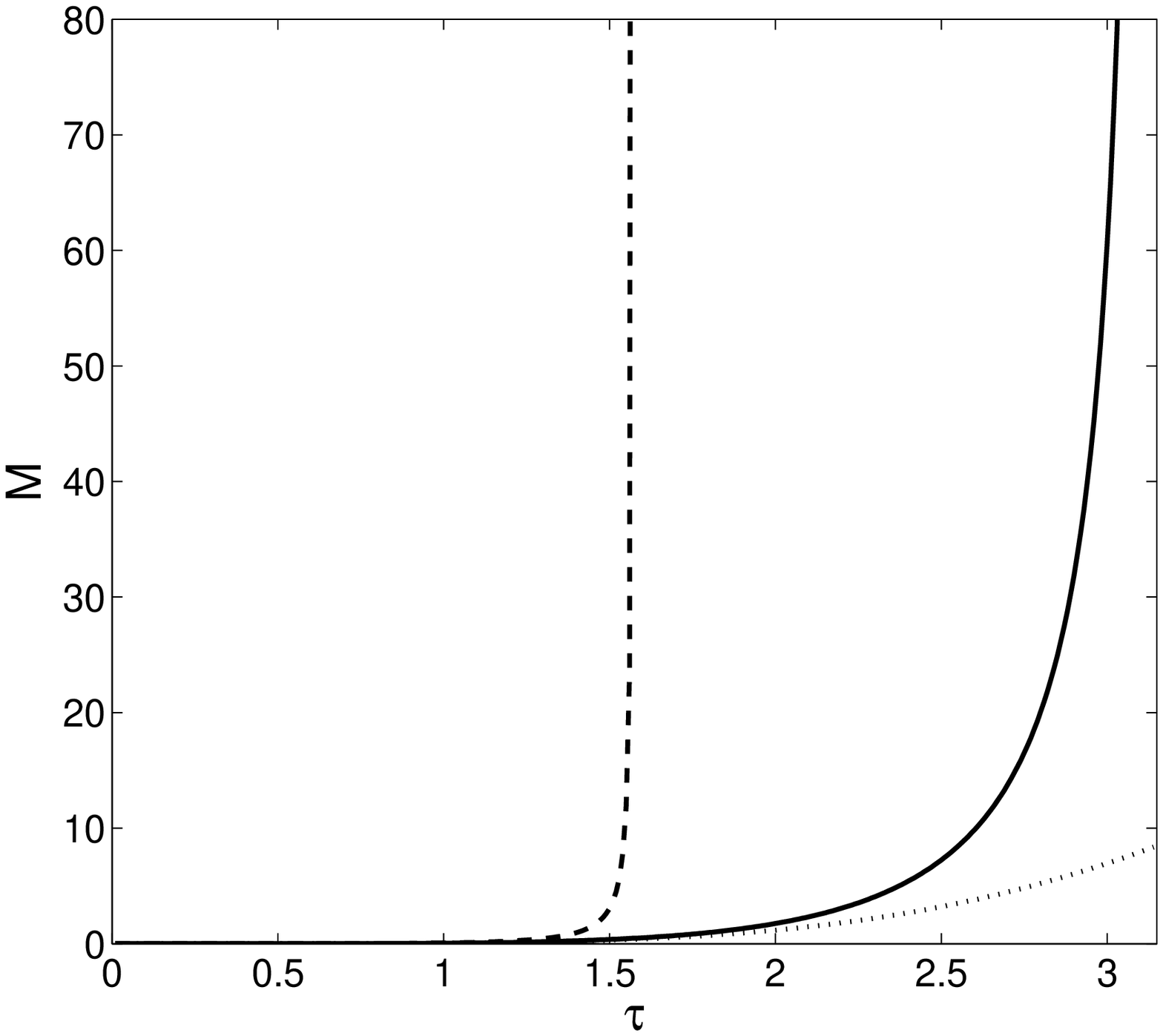}
\end{center}
\caption{\small On the left: Numerical steady states
for $\alpha=0.5, 1, 2$ with initial
data $u_0=3$.
On the right: Mass versus half length of the compact support for $\alpha
= 2, 1, 0.5$.}
\end{figure}

\begin{theorem}[Description of the energy minimizers.]
\label{thm:min}
Let $E$ be the energy functional
in Eq.~(\ref{eq:def-E}).
For each $M>0$, $E$ has a unique nonnegative minimizer
of mass $M$.  The minimizer is
strictly symmetric decreasing on its support.
It is of class ${\cal C}^{1,1}$
and depends continuously on $M$ in ${\cal C}^{1,1}$.
It increases with $M$ in the sense that
for any pair of minimizers $u_1,u_2$ of mass $M_1,M_2$,
$$
M_1< M_2\ \Longrightarrow\ u_1(\theta)< u_2(\theta)\,,
\quad \theta\in\supp(u_1)\,.
$$
\begin{itemize}
\item
If $M(1-\alpha^2)> 2\pi$, the minimizer is strictly positive and given by
\begin{equation}\label{eq:u-positive}
u(\theta)=\frac{M}{2\pi} + \frac{1}{1-\alpha^2}\cos\theta\,;
\end{equation}
\item if $M(1-\alpha^2)<2\pi$, the minimizer
is given by
\begin{equation}\label{eq:u}
u(\theta) = A(\tau)\bigl(\cos(\alpha\theta)-\cos(\alpha\tau)\bigr)
+ u^0(\theta)-u^0(\tau)\,,\quad |\theta|\le\tau
\end{equation}
for some $\tau$ with $\max\{\alpha,1\}\,\tau<\pi$, and vanishes for
$|\theta|\ge \tau$. The coefficient is determined by
$A(\tau)=\frac{u^0_\theta (\tau)}{\alpha\sin(\alpha\tau)}$, where
$u^0$ is the special solution from Eq.~(\ref{eq:u-part});
\item if $M(1-\alpha^2)=2\pi$, then
Eq.~(\ref{eq:u}) for $\tau=\pi$
coincides with Eq.~(\ref{eq:u-positive}), and
$u(\theta)=\frac{1+\cos\theta}{1-\alpha^2}$.
\end{itemize}
\end{theorem}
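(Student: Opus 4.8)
The plan is to combine the three preceding lemmas with an explicit integration of the Euler--Lagrange equation, and then to extract uniqueness and monotonicity from a one-parameter analysis of the resulting family. Existence of a minimizer is already supplied by Lemma~\ref{lem:energy-bound} together with the direct-method argument following it; Lemma~\ref{lem:symmetry} shows any minimizer is symmetric decreasing, and Lemma~\ref{lem:zero} gives ${\cal C}^{1,1}$ regularity and the contact conditions $u(\tau_-)=0$, $u_\theta(\tau_-)=0$, $u_{\theta\theta}(\tau_-)>0$ at the edge of the support. Since the minimizer is even and symmetric decreasing, its support is a single interval: either all of $\Omega$ (the strictly positive case) or $[-\tau,\tau]$ with $\tau<\pi$. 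On the support, $u$ solves the linear ODE~(\ref{eq:EL}), so the whole problem reduces to solving this ODE with the correct boundary data in each case.

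First I would treat the strictly positive case. Here~(\ref{eq:EL}) holds on all of $\Omega$ with periodic boundary conditions, and evenness removes the $\sin(\alpha\theta)$ mode from the general solution~(\ref{eq:EL-sol}). Using the periodic particular solution $\frac{1}{1-\alpha^2}\cos\theta$, smoothness and $2\pi$-periodicity at $\theta=\pm\pi$ force $u_\theta(\pi)=0$, which (away from the degenerate $\alpha=0$) makes the remaining homogeneous coefficient vanish; the mass constraint then fixes the constant and yields exactly~(\ref{eq:u-positive}). Inspecting this formula shows it is symmetric decreasing and strictly positive precisely when $\alpha<1$ and $M(1-\alpha^2)>2\pi$; for $\alpha\ge 1$ it would increase in $\theta$, contradicting Lemma~\ref{lem:symmetry}, so no positive minimizer exists there. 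In the compact-support case I would instead impose $u_\theta(0)=0$ (evenness), $u(\tau)=0$, and $u_\theta(\tau)=0$ on~(\ref{eq:EL-sol}). These determine the Lagrange multiplier and the coefficient, and a direct computation puts the solution in the form~(\ref{eq:u}) with $A(\tau)=u^0_\theta(\tau)/(\alpha\sin(\alpha\tau))$; requiring $u$ to be nonnegative and strictly decreasing on $(0,\tau)$ forces the admissible range $\max\{\alpha,1\}\,\tau<\pi$.

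The crux is then uniqueness, which I would obtain not from convexity (it fails for $\alpha>1$) but from strict monotonicity of the mass along the family. Writing $u_\tau$ for the function~(\ref{eq:u}) and ${\cal M}(\tau)=\int_{-\tau}^\tau u_\tau\,d\theta$, I would differentiate the defining relations in $\tau$. The derivative $\psi=\partial_\tau u_\tau$ again solves $\psi_{\theta\theta}+\alpha^2\psi=\mathrm{const}$, is even, and (differentiating $u_\tau(\tau)\equiv 0$ and $u_{\tau,\theta}(\tau)\equiv 0$ and using Lemma~\ref{lem:zero}) satisfies $\psi(\tau)=0$ and $\psi_\theta(\tau)=-u_{\theta\theta}(\tau_-)<0$. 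Solving explicitly gives $\psi(\theta)=D\bigl(\cos(\alpha\theta)-\cos(\alpha\tau)\bigr)$ with $D>0$, and since $\max\{\alpha,1\}\,\tau<\pi$ keeps $\alpha\theta$ inside $[0,\pi)$ where $\cos$ is strictly decreasing, one gets $\psi>0$ on $(-\tau,\tau)$. Because the integrand vanishes at $\pm\tau$, this yields ${\cal M}'(\tau)=\int_{-\tau}^\tau\psi\,d\theta>0$, so ${\cal M}$ is a strictly increasing continuous bijection of $(0,\pi/\max\{\alpha,1\})$ onto the admissible mass range, with the upper endpoint matching the threshold $2\pi/(1-\alpha^2)$ when $\alpha<1$ (where~(\ref{eq:u}) at $\tau=\pi$ coincides with~(\ref{eq:u-positive})). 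Thus each $M$ determines a unique $\tau$ and hence a unique candidate; since a minimizer exists and must be one of these candidates, it is unique.

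The remaining assertions follow from the same computation. Integrating $\psi>0$ in $\tau$ gives $u_{\tau_1}(\theta)<u_{\tau_2}(\theta)$ for $\tau_1<\tau_2$ at every fixed $\theta\in\supp(u_{\tau_1})$, which is the stated monotonicity in $M$; strict decrease on the support likewise follows from the sign of $u_\theta$ under $\max\{\alpha,1\}\,\tau<\pi$. Continuous dependence on $M$ in ${\cal C}^{1,1}$ follows because $\tau(M)$ is continuous (as the inverse of the strictly monotone ${\cal M}$) and the explicit formula~(\ref{eq:u}) depends smoothly on $\tau$, with $u$, $u_\theta$, and the essential supremum of $u_{\theta\theta}$ all varying continuously, the only jump in $u_{\theta\theta}$ occurring at the contact point where it drops to zero. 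I expect the main obstacle to be the bookkeeping that pins down the admissible range of $\tau$ and matches the endpoints to the positive-solution threshold; once the differentiated problem for $\psi$ is set up, the monotonicity that drives uniqueness, ordering, and continuity is comparatively clean.
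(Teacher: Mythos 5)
Your proposal is correct and follows essentially the same route as the paper: existence, symmetry, and the zero contact angle lemma reduce the problem to the explicit one-parameter family in $\tau$, and uniqueness, monotonicity in $M$, and ${\cal C}^{1,1}$-continuity all come from showing $\partial_\tau u_\tau = \frac{dA}{d\tau}\bigl(\cos(\alpha\theta)-\cos(\alpha\tau)\bigr)>0$ and hence $\frac{dM}{d\tau}>0$, exactly as in the paper (your derivation of $\psi=\partial_\tau u_\tau$ via the differentiated ODE is just a repackaging of the paper's chain-rule computation). The only cosmetic difference is that the paper additionally invokes strict convexity of $E$ for $\alpha<1$ as a shortcut, while you run the monotonicity argument uniformly; both glosses of the strictly positive case (ruling out a nonzero $\cos(\alpha\theta)$ mode for $\alpha\ge1$) are at the same level of detail.
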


\begin{proof}  Fix $M>0$.  By Lemma~\ref{lem:energy-bound},
there exists a minimizer $u$ of mass $M$,
and by Lemma~\ref{lem:symmetry},
it is symmetric decreasing about $\theta=0$.
If the positivity constraint is not active,
then Eq.~(\ref{eq:EL-sol}) holds for all $\theta\in\Omega$.
Since the minimizer is smooth, periodic, and has mass $M$,
we conclude that $\alpha<1$ and Eq.~(\ref{eq:u-positive})
holds. In order for $u$ to
be nonnegative and symmetric decreasing
we must have $M(1-\alpha^2)\ge 2\pi$.  In that region,
$u$ is clearly strictly increasing in $\theta$.

If, on the other hand, the positivity constraint is active, then
the minimizer $u$ is positive on some interval
$(-\tau,\tau)$ and vanishes for $|\theta|\ge \tau$.
By Lemma~\ref{lem:zero}, $u\in {\cal C}^{1,1}(\Omega)$ and
$u_\theta(\pm\tau)=0$.
On its support, $u$ is given by Eq.~(\ref{eq:EL-sol}).
Since $u$ and $u^0$ are even, $B=0$.  The Dirichlet
condition at $\tau$ allows to
eliminate $\lambda$, the Neumann condition determines $A$,
and we find that Eq.~(\ref{eq:u}) holds.
If $\max\{\alpha,1\}\,\tau<\pi$, we claim that
$u$ is indeed nonnegative, symmetric
decreasing in $\theta$, and strictly increasing with $\tau$.
To see this, we differentiate Eq.~(\ref{eq:u}), and use
Lemma~\ref{lem:zero} to obtain
$$
\frac{dA}{d\tau}\cdot  \alpha\sin\alpha\tau =
- u_{\theta\tau}(\tau;\tau)= u_{\theta\theta}(\tau;\tau)>0\,.
$$
By the chain rule, and using once more
that $u_\theta(\tau;\tau)=0$, we have
$$
u_\tau (\theta;\tau) = \frac{dA}{d\tau}
\cdot \bigl( \cos(\alpha \theta)-\cos(\alpha\tau) \bigr)>0
\quad\mbox{for}\ |\theta|<\tau\,.
$$
Since $u$ vanishes identically when
$M=0$, this confirms that $u$ is
positive and strictly symmetric decreasing for $|\theta|<\tau$.
We use that $u_\theta(\tau;\tau)=0$ to compute
$$
\frac{dM}{d\tau} = \frac{dA}{d\tau}\int_{-\tau}^\tau
\cos(\alpha\theta)-\cos(\alpha\tau)\,d\theta>0\,,
$$
and infer that  we can solve for $\tau=\tau(M)$ as a
strictly increasing
smooth function of $M$. By the chain rule and the inverse function
theorem,
$$
\frac{d}{d M}
u(\theta;\tau(M)) = \frac{\cos(\alpha\theta)- \cos(\alpha\tau)}{
\int_{-\tau}^\tau \cos(\alpha\theta')- \cos(\alpha\tau)\, d\theta'}
>0
\,,
$$
establishing the desired continuity and monotonicity of $u$ with
respect to $M$ in the range where Eq.~(\ref{eq:u}) is valid
and $\max\{\alpha,1\}\,\tau<\pi$.

We need to determine the ranges where Eq.~(\ref{eq:u-positive})
and (\ref{eq:u}) hold.  For $\alpha<1$, $E$ is strictly convex.
If $M\ge \frac{2\pi }{1-\alpha^2}$, the function defined
by Eq.~(\ref{eq:u-positive}) is nonnegative and
provides the unique minimizer of $E$ on $C_M$.
If $M< \frac{2\pi}{1-\alpha^2}$, the positivity constraint is
active, and we compute from Eq.~(\ref{eq:u})
that $M\to 0$ as $\tau\to 0$ and $M\to \frac{2\pi}{1-\alpha^2}$
as $\tau\to \pi_-$. Continuous dependence on $M$
follows, since Eq.~(\ref{eq:u}) agrees with
Eq~(\ref{eq:u-positive}) at
$M=\frac{2\pi}{1-\alpha^2}$.

For $\alpha=1$, $E$ is convex, but not strictly convex on $C_M$.
The positivity constraint is active,
because $E$ is not bounded below without it;
for instance, $E\left(\frac{M}{2\pi} +t\cos\theta\right) = -\pi t$.
Note that we  must have $\tau<\pi$,
because the particular solution
$u^0(\theta)= -\frac{1}{2}\theta\sin\theta$
from Eq.~(\ref{eq:u-part}) cannot be continued as
a differentiable periodic function  across $\theta=\pi$,
in violation of Lemma~\ref{lem:zero}.
It is easy to check from Eq.~(\ref{eq:u})
that $M\to 0$ as $\tau\to 0$, and $M\to\infty$ as $\tau\to \pi$.

For $\alpha>1$, the energy is a non-convex quadratic
function on $C_M$, and hence the positivity constraint is active
and $u$ is given by Eq.~(\ref{eq:EL-sol})
on some interval $(-\tau,\tau)$.
Upon closer inspection of Eq.~(\ref{eq:EL-sol}),
we see that $\alpha\tau<1$,
since otherwise $u$ fails to be
symmetric decreasing.  Since $M\to 0$ as $\tau\to 0$
and $M\to\infty$ as $\tau \to \alpha^{-1}\pi$,
the  theorem follows.
\end{proof}

\section{Convergence to minimizers}

In this section, we will prove a lower bound on the speed at
which solutions of Eq.~(\ref{eq:PDE}) with $\omega=0$ can converge
to critical points on the boundary of the positive cone.
We establish this bound for two classes of
solutions: For strictly positive, classical solutions
when $n>\frac{3}{2}$, and for the strong generalized solutions
constructed for $n=3$ in~\cite[Section 3]{Report}.

Our bound uses the {\em entropy method}, applied to the functional
\begin{equation} \label{eq:def-S}
S(u)=\int_ \Omega u^{-\beta}\,d\theta\,,
\end{equation}
where $\beta =n-\frac{3}{2}$.  Strictly speaking,
$S$ is not an entropy  for
Eq.~(\ref{eq:PDE}), because  it may increase as well
as decrease along solutions. One of the reasons is that
the porous medium term $\alpha^2 (u^nu_\theta)_\theta$
appears in Eq.~(\ref{eq:PDE}) with the unfavorable sign.  Still, the
standard entropy methods yields a useful differential
inequality for $S$.

\begin{lemma} [Entropy inequality for classical solutions]
\label{lem:entropy}
Fix $n>\frac{3}{2}$ and let $S$ be given by Eq.~(\ref{eq:def-S})
with $\beta=n-\frac{3}{2}$. For every strictly positive classical
classical solution $u$ of Eq.~(\ref{eq:PDE}),
there exist constants $S_0$ and $K_0$ such that
\begin{equation}\label{eq:entropy}
S(u(\cdot, t)) \le S_0 + K_0t\,.
\end{equation}
\end{lemma}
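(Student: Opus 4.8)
The plan is to differentiate $S$ along the flow, substitute Eq.~(\ref{eq:PDE}) with $\omega=0$, and integrate by parts. Writing the flux as $J=u^n(u_{\theta\theta\theta}+\alpha^2 u_\theta-\sin\theta)$ so that $u_t=-J_\theta$, I compute
\[
\frac{d}{dt}S(u)=-\beta\int_\Omega u^{-\beta-1}u_t\,d\theta
=\beta(\beta+1)\int_\Omega u^{\,n-\beta-2}\,u_\theta\,(u_{\theta\theta\theta}+\alpha^2 u_\theta-\sin\theta)\,d\theta,
\]
where periodicity kills all boundary terms. The decisive point is the choice $\beta=n-\frac{3}{2}$, which forces the exponent $n-\beta-2=-\frac{1}{2}$; this is exactly what makes the highest-order contribution integrable. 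Thus $\frac{d}{dt}S=\beta(\beta+1)\,(I_1+\alpha^2 I_2-I_3)$, with $I_1=\int_\Omega u^{-1/2}u_\theta u_{\theta\theta\theta}\,d\theta$, $I_2=\int_\Omega u^{-1/2}u_\theta^2\,d\theta$, and $I_3=\int_\Omega u^{-1/2}u_\theta\sin\theta\,d\theta$. Since $\beta>0$ when $n>\frac{3}{2}$, the prefactor is positive, so it suffices to bound the bracket by a constant.

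I would then treat the three terms separately. For the surface-tension term, two integrations by parts together with the identity $\int_\Omega u^{-3/2}u_\theta^2 u_{\theta\theta}\,d\theta=\frac{1}{2}\int_\Omega u^{-5/2}u_\theta^4\,d\theta$ yield the exact perfect-square identity
\[
I_1=-\int_\Omega u^{-1/2}\Bigl(u_{\theta\theta}-\frac{1}{2}u^{-1}u_\theta^2\Bigr)^2 d\theta\le 0,
\]
so this is the genuinely dissipative term. For the gravity term, one integration by parts gives $I_3=-2\int_\Omega u^{1/2}\cos\theta\,d\theta$, and Cauchy--Schwarz against the mass constraint yields $|I_3|\le 2\bigl(\int_\Omega u\bigr)^{1/2}(2\pi)^{1/2}=2\sqrt{2\pi M}$, a constant depending only on $M$.

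The main obstacle is the curvature term $\alpha^2 I_2$: unlike the other two, $I_2$ is \emph{not} controlled by the conserved mass or by the energy, since $u^{-1/2}$ blows up where $u$ is small, and it enters with the unfavorable sign. My plan is to absorb it into the dissipative term $I_1$. Setting $w=u^{1/2}$, one has $I_2=4\int_\Omega w w_\theta^2\,d\theta$ and $-I_1=4\int_\Omega w w_{\theta\theta}^2\,d\theta$. Integrating by parts and applying Cauchy--Schwarz with the weight $w$ gives
\[
\int_\Omega w w_\theta^2\,d\theta=-\frac{1}{2}\int_\Omega w^2 w_{\theta\theta}\,d\theta
\le \frac{1}{2}\Bigl(\int_\Omega w^3\,d\theta\Bigr)^{1/2}\Bigl(\int_\Omega w w_{\theta\theta}^2\,d\theta\Bigr)^{1/2}.
\]
Here $\int_\Omega w^3\,d\theta=\int_\Omega u^{3/2}\,d\theta\le M\|u\|_\infty^{1/2}$, and $\|u\|_\infty$ remains bounded for all $t$ because the energy is nonincreasing along the flow by Eq.~(\ref{eq:dissipate}) and controls $\|u\|_\infty$ through Lemma~\ref{lem:energy-bound}. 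A Young inequality with a small parameter then absorbs the resulting multiple of $\int_\Omega w w_{\theta\theta}^2\,d\theta$ into $I_1$, leaving $\alpha^2 I_2+I_1\le C$ with $C$ depending only on $\alpha$, $M$, and the initial energy. Combining the three estimates gives $\frac{d}{dt}S\le K_0$ for a constant $K_0$, and integrating in time yields the claim with $S_0=S(u(\cdot,0))$. The one point to check throughout is that every integration by parts is legitimate for strictly positive classical solutions, which holds because at each fixed time $u(\cdot,t)$ is smooth and bounded away from zero on the compact domain.
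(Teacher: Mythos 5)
Your proposal is correct and follows essentially the same route as the paper: differentiate $S$ along the flow with the prefactor $\beta(\beta+1)=c_n$, reduce the third-order term to the perfect square $-4\int_\Omega u^{1/2}\bigl((u^{1/2})_{\theta\theta}\bigr)^2\,d\theta$, integrate the gravity term by parts, and control the unfavorable curvature term by the dissipation plus $\frac{\alpha^4}{4}\int_\Omega u^{3/2}\,d\theta$, finally bounding $\|u\|_\infty$ via energy dissipation and Lemma~\ref{lem:energy-bound}. The only cosmetic difference is that you absorb $\alpha^2 I_2$ by weighted Cauchy--Schwarz and Young, whereas the paper completes the square directly; these are the same estimate.
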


\begin{proof} Let $S_0=S(u(\cdot, 0))$ and $E_0=E(u(\cdot, 0))$
be the initial values of the entropy and energy, and set
$c_n= \bigl(n\!-\!\frac{3}{2}\bigr)\bigl(n\!-\!\frac{1}{2}\bigr)$.
We will show  that
$$
\frac{d}{dt} S(u(\cdot, t))\le K_0\,,
$$
where
$$
K_0= c_n\left\{
\frac{M\alpha^2}{4}\left[ \frac{M}{2\pi}(1+\alpha^2)
+\left( \frac{2(E_0+M)}{\pi}
  + \frac{M^2}{4\pi^2}\alpha^2(2+\alpha^2)\right)^{\frac{1}{2}}
\right]^{\frac{1}{2}}+ 2\sqrt{M\pi}\right\}\,.
$$
The claim then follows by integrating along the solution.

To see the differential inequality, we use the 
Eq.~(\ref{eq:PDE}) and integrate by parts,
\begin{equation}
c_n^{-1} \frac{dS(u)}{dt}
= \int_\Omega u^{-\frac{1}{2}} u_\theta u_{\theta\theta\theta}\, d\theta
+ \alpha^2 \int_\Omega u^{-\frac{1}{2}}
u_\theta^2\,d\theta
- \int_\Omega u^{-\frac{1}{2}}u_\theta\sin\theta \, d\theta\,.
\label{eq:S-dot-proof}
\end{equation}
The first summand in Eq.~(\ref{eq:S-dot-proof}) we integrate again by
parts,
$$
\int_{\Omega}
u^{-\frac{1}{2}}u_{\theta} u_{\theta\theta\theta}\, d\theta
= -\int_\Omega u^{-\frac{1}{2}}u_{\theta\theta}^2 \, d\theta
+\frac{1}{2} \int_\Omega u^{-\frac{3}{2}}
u_\theta^2 u_{\theta\theta}\, d\theta
=: -A + \frac{1}{2} B\,,
$$
and integrate by parts once more to see that
$B =
\frac{1}{2} \int u^{-\frac{5}{2}}u_\theta^4\, d\theta$.
After collecting terms, the first summand becomes
$$
-A+B -\frac{1}{2} B = -\int_\Omega u^{-\frac{1}{2}}
\left( u_{\theta\theta} - \frac{1}{2}u^{-1}u_{\theta}^2\right)^2\,
d\theta
 = -4\int_\Omega u^{\frac{1}{2}}
\left((u^{\frac{1}{2}})_{\theta\theta}\right)^2\,d\theta\,.
$$
The second summand in Eq.~(\ref{eq:S-dot-proof}) we
also integrate by parts,
$$
\alpha^2\int_\Omega  u^{-\frac{1}{2}} u_\theta^2\, d\theta =
2 \alpha^2 \int_\Omega u (u^{\frac{1}{2}})_{\theta\theta}\, d\theta\,,
$$
and the third summand we rewrite as
$$ -\int_\Omega  u^{-\frac{1}{2}} u_\theta\sin\theta\, d\theta=
2\int_\Omega u^{\frac{1}{2}}\cos\theta\, d\theta\,.
$$
Inserting these identities into Eq.~(\ref{eq:S-dot-proof})
and completing the square we arrive at
\begin{eqnarray}
\label{eq:S-dot}
\frac{dS(u)}{dt} &=& c_n \left\{ - \int_\Omega u^{\frac{1}{2}}
\left( 2( u^{\frac{1}{2}})_{\theta\theta}-
\frac{\alpha^2}{2} u^{\frac{1}{2}}\right)^2\,d\theta
+ \frac{\alpha^4}{4} \int_\Omega u^{\frac{3}{2}}\,d\theta
+2\int_\Omega u^{\frac{1}{2}}\cos\theta\, d\theta\right\}\\
\nonumber &\le &
c_n \left\{ \frac{\alpha^4}{4} M
\sqrt{||u||_\infty} + 2\sqrt{M\pi}\right\}\,.
\end{eqnarray}
The claim follows from Eq.~(\ref{eq:energy-bound})
of Lemma~\ref{lem:energy-bound},
\end{proof}

\bigskip The entropy inequality in Eq.~(\ref{eq:entropy})
holds also for many types of weak solutions of Eq.~(\ref{eq:PDE})
that are obtained as limits of classical solutions
of suitable regularizations. We demonstrate this for the
strong generalized solutions in the case $n=3$
that were recently constructed by Chugunova, Pugh, and
Taranets in~\cite[Theorem 2]{Report}.
For $\eps>0$, consider the regularized equation
\begin{equation} \label{eq:PDE-eps}
 u_t + \left[f_{\epsilon}(u)\,(u_{\theta\theta\theta} +
\alpha^2\,u_{\theta} - \sin\theta) \right]_\theta =
0\,,\quad \theta \in \Omega\,,
\end{equation}
where $f_\eps (z) = \frac{z^4}{|z| + \epsilon}$.
This equation has strictly positive classical solutions
positive initial data in $H^1(\Omega)$
(see~\cite[Lemma 3.4]{Report}), and the energy in Eq.~(\ref{eq:def-E})
is dissipated.

The key step is to extend the entropy method to the regularized
equation.  Set
\begin{equation}
\label{eq:def-S-eps}
  S_\eps(u) = \int_{\Omega} s_\eps(u)\, dx\,,
\end{equation}
where $s_\eps(z)=z^{-\frac{3}{2}}(1+\frac{3}{7}\eps z^{-1})$
is chosen so that $s_\eps''(z)f_\eps(z)= c_3 z^{-\frac{1}{2}}$.
Then $\frac{dS_\eps}{dt}$ satisfies the entropy identity
in Eq.~(\ref{eq:S-dot}) along solutions  of Eq.~(\ref{eq:PDE-eps}),
and hence
$$
S_\eps(u(\cdot, t))-S_\eps(u(\cdot, 0))\le K_0t\,,
$$
with the same constant as in Lemma~\ref{lem:entropy}.
As $\eps\to 0$ along a suitable subsequence, solutions of
Eq.~(\ref{eq:PDE-eps}) converge
uniformly to solutions of the original problem in
Eq.~(\ref{eq:PDE}), and the values
of the energy and entropy also converge
for all times,
provided they are finite at $t=0$.
Thus the entropy inequality in Eq.~(\ref{eq:entropy})
remains valid in the limit. This can be used to establish
additional regularity properties: Using a suitable subsequence
where $u_{\theta\theta}$ converges weakly
in $L^2(\Omega\times(0,T))$ one can show that for every $T>0$,
$$
\int_0^T \int_\Omega u^{\frac{1}{2}} \left( 2(
u^{\frac{1}{2}})_{\theta\theta}- \frac{\alpha^2}{2}
u^{\frac{1}{2}}\right)^2\,d\theta dt
\le S(u(\cdot, 0))+K_0T<\infty \,,
$$
and conclude that $\int_\Omega u^{\frac{1}{2}}
\left((u^{\frac{1}{2}})_{\theta\theta}\right) ^2\, d\theta $
is finite  for almost every $t>0$.

Our final result concerns the dynamics near the energy
minimizer, see Figure~4.

\begin{figure}[t]
\label{fig:converge}
\begin{center}
\includegraphics[height=5cm] {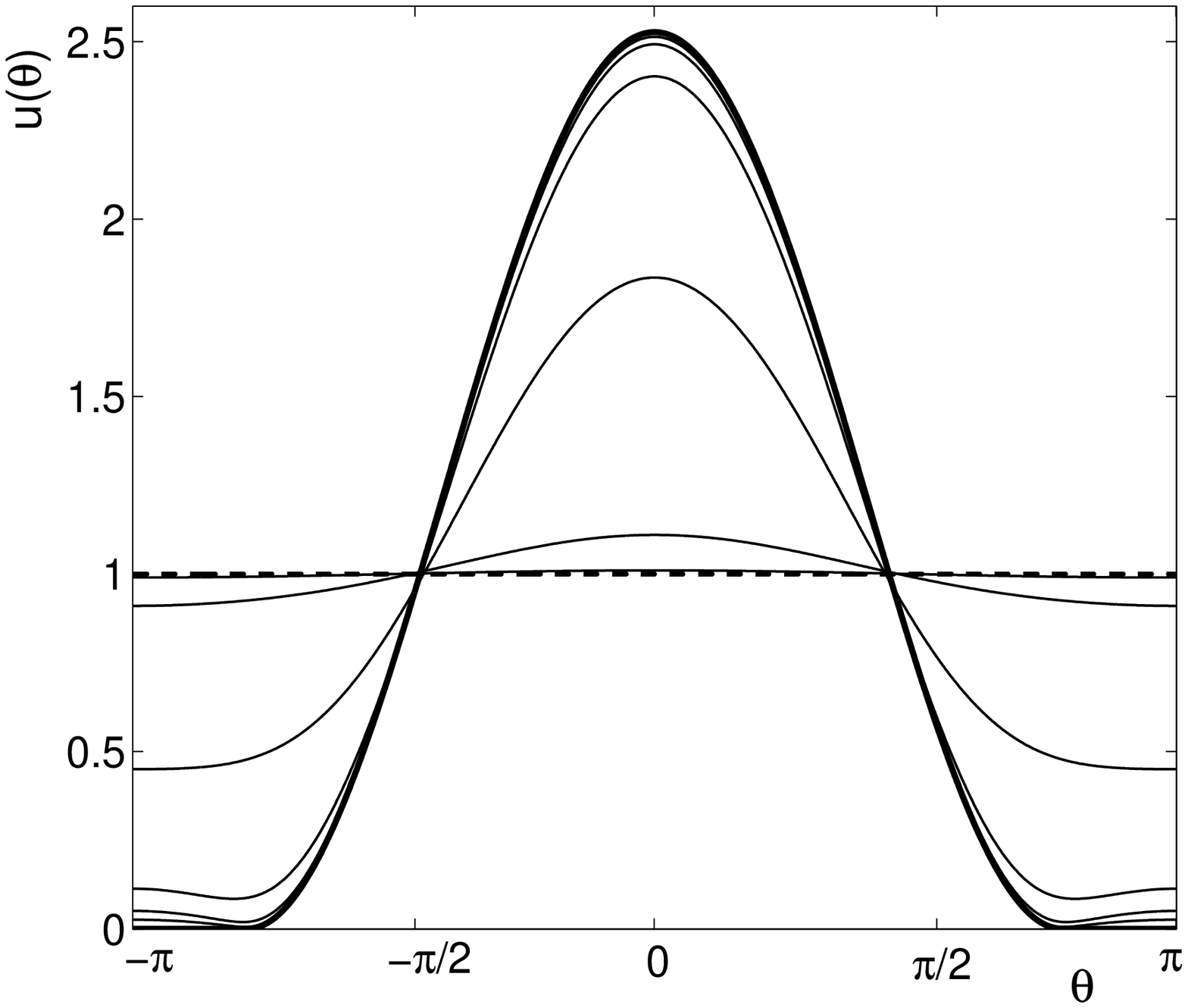}
\quad\quad
\quad\quad
\includegraphics[height=5cm] {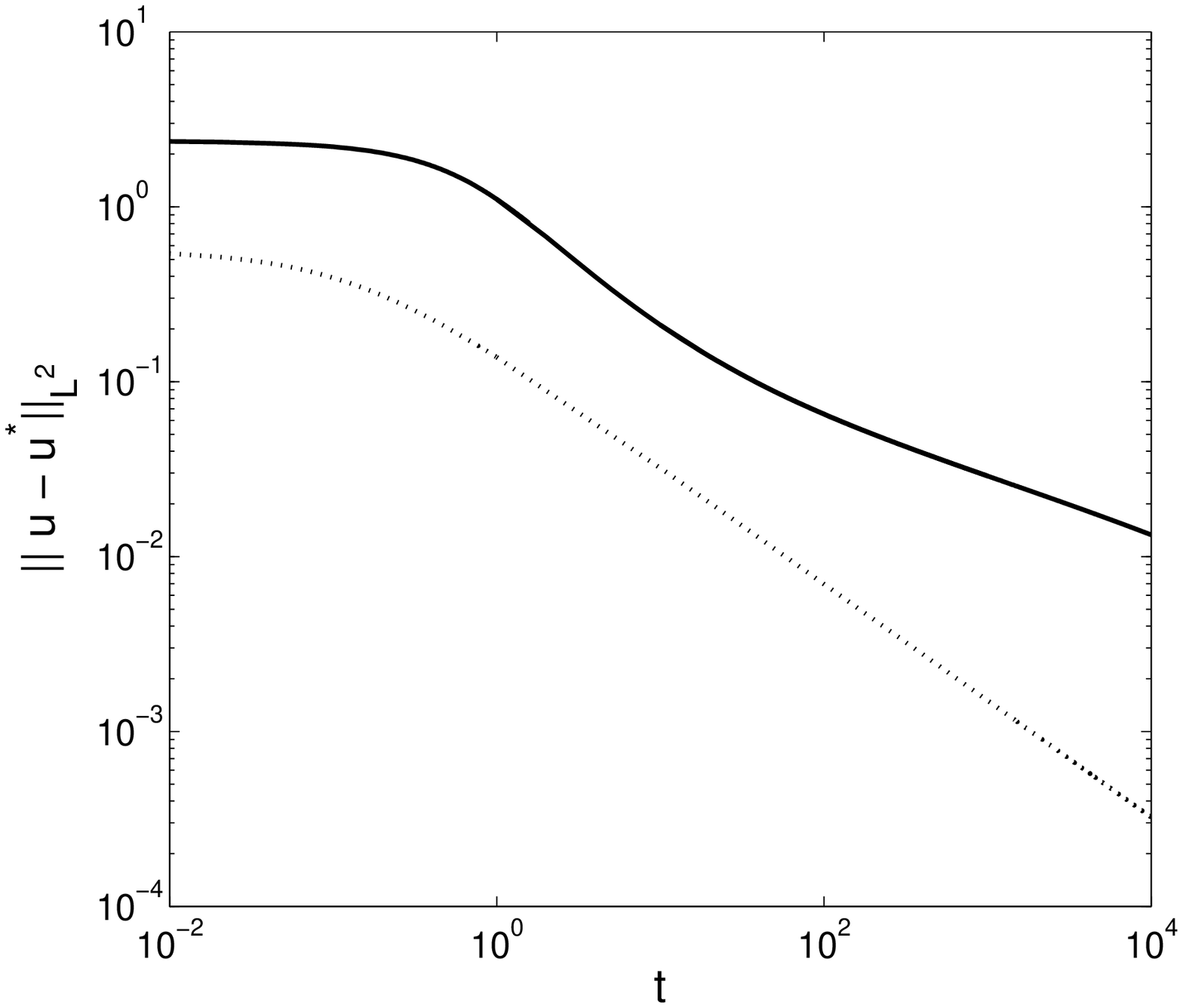}
\end{center}
\caption{\small Evolution of a solution
with $\alpha=1$, $n=3$ and initial data $u_0=1$.
On the left: Time shots of the
numerical solution at $t = 0, 10^{-2}, 10^{-1}, 1, 10, 10^2, 10^3$.
On the right: $L^2$-distance of the solution from the
energy minimizer. The dashed line
shows the bound from Theorem~\ref{thm:decay}.}
\end{figure}

\begin{theorem}[Bounds on the speed of convergence.]
\label{thm:decay}
Consider Eq.~(\ref{eq:PDE}) with $\omega=0$, and set
$\beta=n-\frac{3}{2}$. Let $u$ be a solution
of mass $M$ that dissipates energy and
satisfies Eq.~(\ref{eq:entropy}) with constants $S_0$ and $K_0$,
and let $u^*$  be the energy-minimizing steady state of the
same mass.

\begin{itemize}
\item If $u^*$ is strictly positive,
set $\mu = (1-\alpha^2)(\min u^*)^n$
and $\eps_0=\frac{(1-\alpha^2)\pi}{2}(\min u^*)^2$.
For each solution with $E(u(\cdot, 0))< E(u^*) +\eps_0$ there exists
a constant $K_1$ such that
$$
||u(\cdot, t)-u^*||_{H^1}\le K_1 e^{-\mu t}\,;
$$
\item if $u^*$ vanishes on an interval of positive length $L$,
and $n>\frac{3}{2}$, then
$$
||u(\cdot,t)-u^*||_2\ge L^{1+\frac{\beta}{2}}
\bigl(S_0+K_0t\bigr)^{-\frac{1}{\beta}}\,;
$$
\item
if  $u^*$ vanishes quadratically at a point, and $n>\frac{5}{2}$,
then there exist positive constants $K_2$ and $K_3$ such that
$$
||u(\cdot,t)-u^*||_2\ge \bigl(K_2+K_3t\bigr)^{-\frac{3}{2(\beta-1)}}\,.
$$
\end{itemize}
\end{theorem}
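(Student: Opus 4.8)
The three cases are essentially independent and call for two different tools: the dissipation identity~(\ref{eq:dissipate}) in the positive case, and the entropy bound of Lemma~\ref{lem:entropy} (together with its $n=3$ extension) in the two degenerate cases.

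\emph{Positive minimizer.} The plan is the standard ``gradient flow with a spectral gap'' argument. Writing $v=u-u^*$ and $\mathcal E(t)=E(u(\cdot,t))-E(u^*)$, I would first use that $E$ is quadratic and that $u^*$ is a constrained critical point: since $v$ has zero mass the linear term drops, so $\mathcal E=\frac12\int_\Omega v_\theta^2-\alpha^2v^2\,d\theta$ exactly. The Wirtinger inequality $\int_\Omega v^2\le\int_\Omega v_\theta^2$ for mean-zero periodic $v$ then gives, for $\alpha<1$, the equivalence $\mathcal E\simeq\|v\|_{H^1}^2$. Next I would bound the dissipation from below. With $w=\delta E/\delta u$ one has $w\equiv\lambda$ at the minimizer, so $w_\theta=-(v_{\theta\theta}+\alpha^2v)_\theta$, and expanding in Fourier modes gives $\int_\Omega w_\theta^2\,d\theta\ge2(1-\alpha^2)\mathcal E$, the infimum being attained on the $k=\pm1$ modes. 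Inserting $\int_\Omega u^n w_\theta^2\ge(\min u)^n\int_\Omega w_\theta^2$ into~(\ref{eq:dissipate}) yields $\frac{d}{dt}\mathcal E\le-2(1-\alpha^2)(\min u)^n\mathcal E$. Finally, the hypothesis $E(u(\cdot,0))<E(u^*)+\eps_0$, together with the monotonicity of $\mathcal E$ and the Sobolev bound from Lemma~\ref{lem:energy-bound}, keeps $\|v\|_\infty$ below $\min u^*$ for all $t$, so $u$ stays strictly positive and $\min u\to\min u^*$; the rate then approaches $\mu=(1-\alpha^2)(\min u^*)^n$, giving $\mathcal E(t)\lesssim e^{-2\mu t}$ and hence the $H^1$ bound. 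The main obstacle is precisely this last step: one must trap the solution in the basin of $u^*$ so that the mobility $u^n$ stays uniformly bounded below, and the threshold $\eps_0$ is tuned for exactly this.

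\emph{Dry interval.} For this case I would use only the entropy bound $S(u(\cdot,t))\le S_0+K_0t$ from~(\ref{eq:entropy}). Let $I$ be the interval of length $L$ on which $u^*\equiv0$; there $|u-u^*|=u$. Jensen's inequality for the convex function $z\mapsto z^{-\beta}$ gives $\int_I u^{-\beta}\,d\theta\ge L^{1+\beta}\bigl(\int_I u\,d\theta\bigr)^{-\beta}$, which bounds the mass $m=\int_I u\,d\theta$ from below by a multiple of $L^{(1+\beta)/\beta}(S_0+K_0t)^{-1/\beta}$. Cauchy--Schwarz then gives $\|u-u^*\|_2\ge(\int_I u^2)^{1/2}\ge L^{-1/2}m$, a lower bound decaying exactly like $(S_0+K_0t)^{-1/\beta}$ with an explicit power of $L$ in front. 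This case presents no real difficulty.

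\emph{Quadratic zero.} Near an isolated quadratic zero $\theta_0$ one has $u^*(\theta)\approx a(\theta-\theta_0)^2$ with $a=\frac12 u^*_{\theta\theta}(\theta_0)>0$ (positivity from Lemma~\ref{lem:zero}). The key observation is that $\int u^{*-\beta}$ diverges as soon as $\beta>\frac12$, so a finite entropy budget forbids $u$ from tracking the parabola down to zero. I would quantify this on a window $I_\delta=\{|\theta-\theta_0|<\delta\}$: Jensen on $I_\delta$ forces $\int_{I_\delta}u\gtrsim\delta^{1+1/\beta}(S_0+K_0t)^{-1/\beta}$, whereas the parabola contributes only $\int_{I_\delta}u^*\sim a\delta^3$. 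For $\beta>1$ the entropy-forced mass dominates the parabolic mass as $\delta\to0$, so $\int_{I_\delta}(u-u^*)$ is bounded below by the same quantity, and Cauchy--Schwarz gives $\|u-u^*\|_2\gtrsim\delta^{1/2+1/\beta}(S_0+K_0t)^{-1/\beta}$. Optimizing the free scale $\delta$ against the constraint---balancing $a\delta^3$ with the entropy term fixes $\delta\sim(S_0+K_0t)^{-1/(2\beta-1)}$---then produces a power-law lower bound of the stated form $(K_2+K_3t)^{-\gamma}$, with $\gamma$ an explicit function of $\beta$. The main obstacle is the correct choice of window: $\delta$ must shrink with $t$ fast enough that the quadratic profile is negligible against the entropy-forced floor, yet slowly enough that the budget is not already spent, and one must verify that the higher-order corrections to $u^*\approx a(\theta-\theta_0)^2$ do not upset this balance.
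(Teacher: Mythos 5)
Your proposal follows essentially the same route as the paper in all three cases: the quadratic expansion of $E$ about $u^*$, the Fourier multiplier bound $p^2(p^2-\alpha^2)^2\ge(1-\alpha^2)(p^2-\alpha^2)$, and the trapping of $\min u$ via the threshold $\eps_0$ for the first claim, and the entropy bound combined with Jensen's inequality on the dry interval (resp.\ on a shrinking window at the touchdown point) for the other two. The only substantive deviation is in the third case, where your sharper accounting of the parabola's contribution ($\int_{I_\delta}u^*\lesssim\delta^3$ in mass, rather than the paper's cruder $O(L^{3/2})$ bound on its $L^2$ norm) leads to the window $\delta\sim(S_0+K_0t)^{-1/(2\beta-1)}$ and the exponent $\frac{5}{2(2\beta-1)}$ in place of $\frac{3}{2(\beta-1)}$ --- a slightly stronger lower bound that still implies the stated one.
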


\begin{proof} If $u^*$ is strictly positive,
then $\alpha<1$ by Theorem~\ref{thm:min}, and $E$ is strictly convex.
The Taylor expansion
of $E$ about $u^*$ terminates after the quadratic term, because
$E$ itself is quadratic, and the linear term vanishes because
$u^*$ is a critical point where the positivity constraint is not active,
and therefore
\begin{equation}\label{eq:square}
E(u)=E(u^*) +\frac{1}{2}\int_\Omega (u-u^*)_\theta^2-\alpha^2(u-u^*)^2\,
d\theta\,.
\end{equation}
Since $u$ and $u^*$ have the same mass, the energy difference
dominates the $H^1$-distance,
$$
E(u)-E(u^*)\ge \frac{1-\alpha^2}{2} ||(u-u^*)_\theta||_2^2
\ge \frac{(1-\alpha^2)\pi}{2} ||u-u^*||_\infty^2\,.
$$
Set $\eps= E(u(\cdot, 0))-E(u^*)<\eps_0$,
then $\min u \ge (1\!-\!\frac{\eps}{\eps_0})\min u^*>0$.
This means that $u$ is a strictly positive, classical solution
that can be differentiated as often as necessary.
We compute the $L^2$-gradient of $E$ from Eq.~(\ref{eq:square}) as
$\frac{\delta E}{\delta u}=-(u-u^*)_{\theta\theta}-\alpha^2(u-u^*)$.
By Eq.~(\ref{eq:dissipate}) the energy is dissipated at rate
\begin{eqnarray*}
\frac{d}{dt} E(u(\cdot, t) )
&=&  -\int_\Omega u^n \left [ (u-u^*)_{\theta\theta}+\alpha^2 (u-u^*)
\right]_\theta^2\, d\theta\\
&\le& - (\min u)^n \cdot \int_\Omega
\left [ (u-u^*)_{\theta\theta\theta}+\alpha^2 (u-u^*)_\theta \right]^2
\,d\theta\\
&\le& -2\, \left((1\!-\!\frac{\eps}{\eps_0})\min u^*\right)^n\, (1-\alpha^2)\,
\left( E(u)-E(u^*)\right)\,.
\end{eqnarray*}
In the last step, we have used Parseval's identity
to rewrite the integral  and the energy difference in
terms of the Fourier coefficients of $u-u^*$, and
estimated the Fourier multipliers by
$$
p^2(p^2-\alpha^2)^2\ge (1-\alpha^2)(p^2-\alpha^2)\,,\quad (p\ne 0)\,.
$$
Exponential convergence of the energy follows
from Gronwall's lemma. Since $\min u$ converges exponentially
to $\min u^*$, we conclude that
$E(u(\cdot, t))-E(u^*)\le Ke^{-2\mu t}$ for some constant $K$,
and the first claim follows.

If $u^*$ vanishes on an interval of
length $L>0$, we apply Jensen's inequality to the
convex function $y\mapsto y^{-\frac{\beta}{2}}$ on this
interval to obtain
$$
S(u(\cdot, t)) \ge L^{1+\frac{\beta}{2}}\, ||u(\cdot,t)-u^*||_2^{-\beta}\,.
$$
The second claim follows by using
the bound on the entropy in Eq.~(\ref{eq:entropy})  and solving
for the distance $||u(\cdot, t)-u^*||_2$.

If $u^*$ vanishes quadratically at a point, we consider the interval
of length $L$ centered at that point and obtain with the same
calculation as for the second case that
\begin{eqnarray*}
||u(\cdot, t)-u^*||_2
&\ge & ||u(\cdot, t)I_{|\theta|\ge\tau }||_2
- ||u^*I_{|\theta|>\tau}||_2\\
& \ge& L^{\frac{1}{\beta} + \frac{1}{2}}
\bigl(S_0 + K_0t\bigr)^{-\frac{1}{\beta}} -
O(L^{\frac{3}{2}})\,.
\end{eqnarray*}
The proof is completed by
choosing $L=\eps(S_0+K_0t)^{-\frac{1}{\beta-1}}$ for 
$\eps>0$ sufficiently small.
\end{proof}

\bigskip

Consider specifically the case of Eq.~(\ref{eq:PDE})
where $n=3$, $\alpha=1$, and $\omega=0$.
By Theorem~\ref{thm:min}, the energy minimizer
vanishes on an interval of length $L=2(\pi\!-\!\tau)>0$,
and by Theorem~\ref{thm:decay}, solutions cannot converge to
the minimizer more quickly than $t^{-\frac{2}{3}}$.  We
suspect that the distance from the steady state actually
decays with $t^{-\frac{1}{3}}$.

This conjecture is supported by simulations, and by analogy
with aggregation processes where the convergence to states 
is governed by power laws.  In such processes, the speed 
of convergence is limited by the rate at which mass 
can be transferred from a region of low density to the 
region of accumulation.  One example is the Lifshitz-Slyozov cubic law,
which describes late-stage grain growth in alloys and
the evaporation-condensation mechanism in
supersaturated solutions~\cite{LifSly}.
A second example is the separation of a water drop
from a tap, shown on the left of Figure~5.
The drop slowly draws water from the tap through a 
thin neck.  The right side of Figure~5
shows the accumulation of the mass of a solution
of Eq.~(\ref{eq:PDE}) at the bottom of the cylinder.
The large drop at the bottom grows by
slowly pulling mass from the small drop on the top
through the thin bridges that connect them.

\begin{figure}[ht]
\label{fig:tap}
\begin{center}
\includegraphics[height= 5.5 cm] {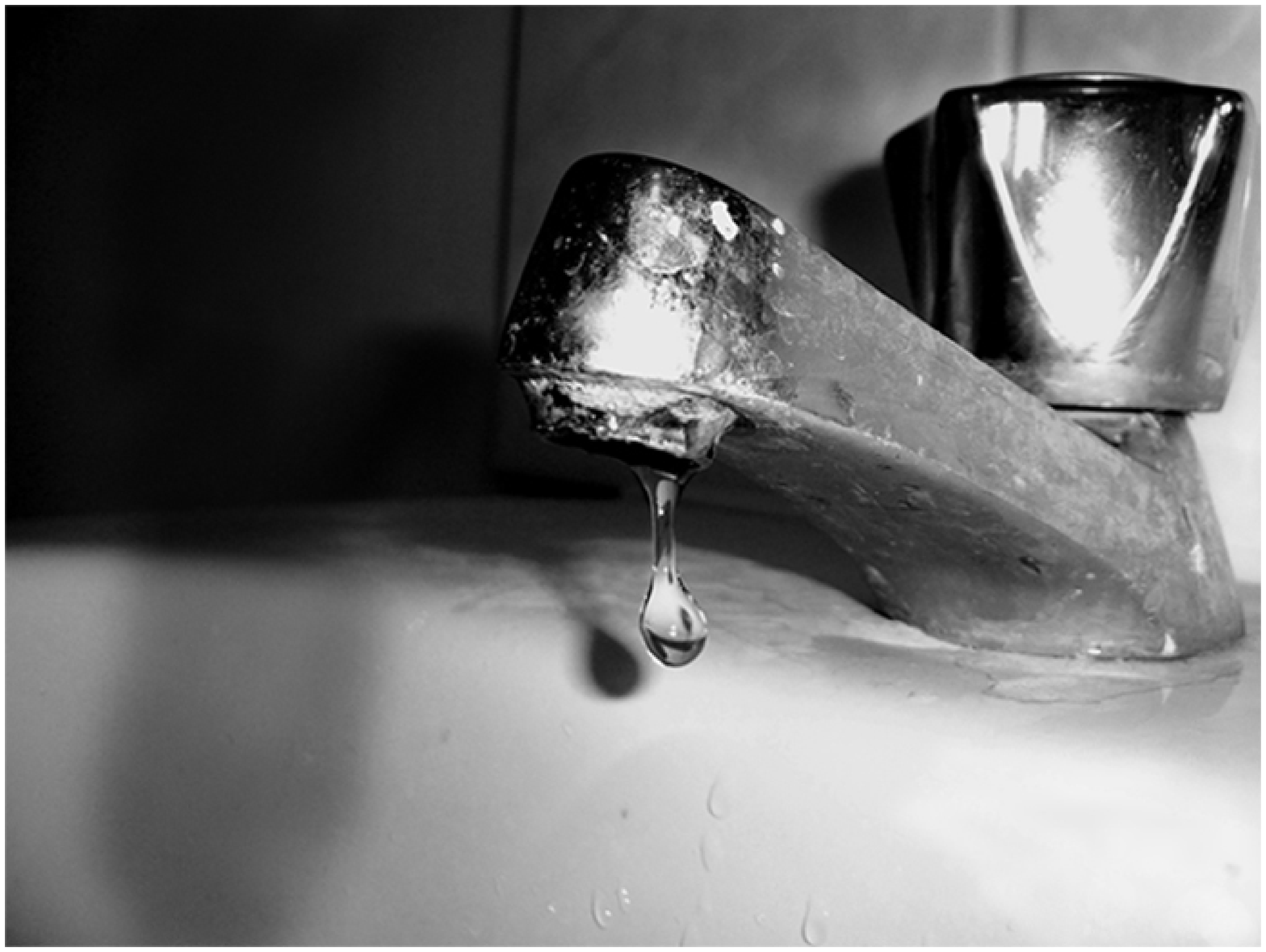}
\quad\quad \quad\qquad
\includegraphics[height= 5.5 cm] {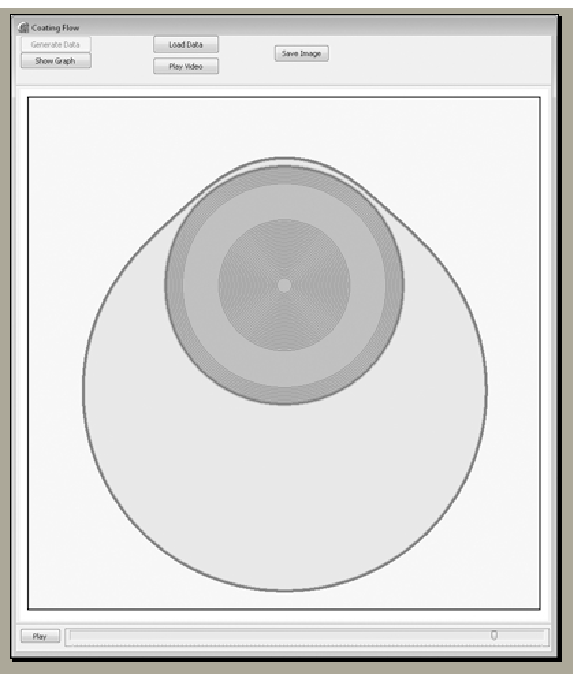}
\end{center}
\caption{\small
On the left: Separation of a water drop at a tap.
Photograph ``Perfect Back and White  ---
Water drop'', by Elliott Minns~\cite{Minns}.
On the right: Matlab simulation of
the long-time behavior of a solution of
Eq.~(\ref{eq:PDE}) with parameters $n=3$, $\alpha=1$,
and  $\omega=0$. The thickness of the film is
exaggerated to emphasize the shape.}
\end{figure}

\subsection* {\bf Acknowledgments} The research in this paper
was partially supported by an NSERC Discovery grant (A.B., B.S.) 
and an NSERC postdoctoral fellowship (M.C.).
We are grateful to Elliott Minns for permission
to reproduce his photograph ``Perfect Black and
White --- Water Drop''~\cite{Minns}.

\end{document}